\newtheorem{theorem}{Theorem}
\newtheorem{lemma}[theorem]{Lemma}
\newcommand{\tr}[1]{\mathrm{Tr}\left[#1 \right]}
\newcommand{\bb}[1]{\mathbb{#1}} 
\newcommand{\dPhi}{\Delta_b}
 \newcommand{\R}{\bb{R}}
\title{An Elementary Proof of the Restricted Invertibility Theorem%
  \thanks{ This material is based upon work supported by the National
    Science Foundation under grants CCF-0634904, CCF-0634957 and
    CCF-0915487.  Any opinions, findings, and conclusions or
    recommendations expressed in this material are those of the
    author(s) and do not necessarily reflect the views of the National
    Science Foundation.  }} \author{
  Daniel A. Spielman \\
  Department of Computer Science\\
  Program in Applied Mathematics\\
  Yale University \and
  Nikhil Srivastava\\
  Department of Computer Science\\
  Yale University} \date{\today}
\begin{document}
\maketitle
\begin{abstract}
We give an elementary proof of a generalization of Bourgain and Tzafriri's 
  Restricted Invertibility Theorem, which says roughly that any matrix
  with columns of unit length and bounded operator norm has a large coordinate
  subspace on which it is well-invertible.
Our proof gives the tightest known form of this result, 
  is constructive,
  and provides a deterministic polynomial time algorithm for finding the 
  desired subspace. 
\end{abstract}
\section{Introduction}
In this note we study the following well-known theorem of Bourgain and Tzafriri.
\begin{theorem}[Restricted Invertibility \cite{bt87}]\label{thm:bt}
There are universal constants $c,d>0$, such that whenever $L$ is
  a linear operator on $\ell_2^n$ with $\|Le_i\|=1$ for the canonical basis vectors $\{e_i\}_{i\le n}$,
  one can find a subset $\sigma \subset [n]$ of cardinality
  \[ |\sigma| \ge cn/\|L\|_2^2\]
  for which 
  \begin{equation} \label{eqn:wellinv} \left\|\sum_{i\in\sigma} a_iLe_i\right\|^2 \ge d\sum_{i\in \sigma}|a_i|^2\end{equation}
  for all scalars $\{a_i\}_{i\in\sigma}$.
\end{theorem}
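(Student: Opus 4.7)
My plan is to build $\sigma$ greedily, one column at a time, while using a \emph{barrier function} to maintain a quantitative lower bound on the smallest eigenvalue of the Gram matrix of the selected columns. Writing $v_i = Le_i$ and $A_J = \sum_{i \in J} v_i v_i^T$ for the partial Gram matrix, I will track the potential
\[ \Phi^l(A_J) \;=\; \tr{(A_J - l I)^{-1}} \;=\; \sum_i \frac{1}{\lambda_i(A_J) - l}, \]
where $l < \lambda_{\min}(A_J)$ is a ``lower barrier.'' Keeping $\Phi^l$ bounded forces every eigenvalue of $A_J$ to stay away from $l$ from above, so raising $l$ translates directly into an improved $\lambda_{\min}(A_J)$. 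I will start from $A_0 = 0$ with an initial barrier $l_0 < 0$ chosen so that $\Phi^{l_0}(0) = -n/l_0$ equals a suitable constant, and take $k \approx n/\|L\|_2^2$ greedy steps, each adding one $v_i v_i^T$ and advancing $l$ by a fixed $\delta \approx 1/\|L\|_2^2$, while maintaining $\Phi$ bounded. After $k$ steps, $\lambda_{\min}(A_\sigma) \ge l_0 + k\delta$, which for the right constants is positive and of order $1$, yielding \eqref{eqn:wellinv}.

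The heart of the argument is an inductive lemma: from any state $(A_J, l)$ with $\Phi^l(A_J)$ bounded, some $i \in [n] \setminus J$ admits $\Phi^{l+\delta}(A_J + v_i v_i^T) \le \Phi^l(A_J)$. The Sherman--Morrison formula shows that adding $v_i v_i^T$ decreases the shifted barrier by
\[ \frac{v_i^T (A_J - (l+\delta)I)^{-2} v_i}{1 + v_i^T (A_J - (l+\delta)I)^{-1} v_i}, \]
while the shift $l \mapsto l+\delta$ alone increases $\Phi$ by a quantity independent of $i$. Summing the per-column decreases and invoking $\sum_i v_i v_i^T = LL^T$ together with $\tr{LL^T} = n$ (unit columns) and $LL^T \preceq \|L\|_2^2 \, I$ (operator norm bound), I expect an averaging inequality of the schematic form \emph{total decrease} $\ge$ \emph{total shift-increase}. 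Averaging then guarantees at least one admissible $i$, which I add to $\sigma$.

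The main obstacle is the $v_i$-dependent denominator $1 + v_i^T M^{-1} v_i$, which prevents a direct linear averaging over $i$. The workaround I have in mind is to upper bound this denominator uniformly in $i$ using the maintained bound on $\Phi$ itself (together with $\|v_i\| = 1$), so that after the simplification the per-$i$ quantity becomes linear in $v_i v_i^T$ and the averaging becomes a trace identity. The resulting inequality reduces, via the two hypotheses on $L$, to a scalar condition in $\delta$ and $\|L\|_2^2$; balancing $\delta$, $l_0$, and $k$ so that this condition holds throughout the entire iteration is the one real calculation, and it produces explicit universal constants $c, d > 0$. Because at each step the best $i$ can be chosen by directly evaluating rank-one Sherman--Morrison updates, the procedure is deterministic and runs in polynomial time, matching the algorithmic claim in the abstract.
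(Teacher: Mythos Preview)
Your plan has a genuine gap rooted in the fact that $A_J=\sum_{i\in J}(Le_i)(Le_i)^T$ always has a large kernel: its rank is $|J|\le cn/\|L\|_2^2\ll n$, so $\lambda_{\min}(A_J)=0$ on $\R^n$ throughout the process. The BSS lower-barrier argument you are invoking requires $l<\lambda_{\min}(A_J)$ at every step, which here forces $l<0$ for all $k$; consequently the conclusion ``$\lambda_{\min}(A_\sigma)\ge l_0+k\delta$'' is vacuous. If instead you let $l$ cross $0$, then at that step the $n-|J|$ zero eigenvalues lie \emph{below} the shifted barrier $l'=l+\delta$, the resolvent $(A_J-l'I)^{-1}$ becomes indefinite, the Sherman--Morrison denominator $1+v^T(A_J-l'I)^{-1}v$ can take either sign, and the monotonicity you rely on (``adding $vv^T$ decreases $\Phi^{l'}$'') fails. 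There is also a mismatch between the potential you track, $\tr{(A_J-lI)^{-1}}$, and the quantities your averaging produces: summing over $i$ gives $\tr{LL^T(A_J-l'I)^{-2}}$ and $\tr{LL^T(A_J-l'I)^{-1}}$, so the inequality you need does not close against the unweighted trace.

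The paper fixes all of this by (i) using the $LL^T$--weighted potential $\Phi_b(A)=\tr{L^T(A-bI)^{-1}L}$, which is exactly what the averaging over columns $Le_i$ produces; (ii) placing the barrier at a \emph{positive} value $b$ that sits \emph{between} the zero and nonzero eigenvalues of $A$ and \emph{decreasing} it by $\delta$ at each step (so the potential has mixed sign and is kept very negative, $\Phi_b(A)\le -m-\|L\|_2^2/\delta$); and (iii) proving a separate ``jumping'' lemma showing that whenever $w^T(A-b'I)^{-1}w<-1$, the new nonzero eigenvalue of $A+ww^T$ lands \emph{above} $b'$. Step~(iii) is the crucial idea your proposal lacks: it is what allows the barrier to control the smallest \emph{nonzero} eigenvalue despite the persistent kernel. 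Your outline is essentially the BSS sparsifier lower barrier, which works there because $A$ eventually becomes full rank; here it does not, and the mechanism above is the needed replacement.
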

This theorem has had significant applications in the local theory of Banach spaces and in the
  study of convex bodies in high dimensions. 
It is also considered 
  a step towards the resolution of the famous Kadison-Singer conjecture, 
  which asks if there exists 
   a partition of $[n]$ into a constant number of subsets $\sigma_1,\ldots,\sigma_k$ for which (\ref{eqn:wellinv}) holds.
Recently, the theorem has attracted attention in
   numerical analysis due to its connection with 
  the column subset selection problem, which seeks to select a `representative' subset of columns from a given matrix.
In particular, Tropp \cite{tropp} has developed a randomized polynomial time algorithm which finds the subset $\sigma$ efficiently.

Bourgain and Tzafriri's proof of Theorem \ref{thm:bt} uses probabilistic and functional analytic
  techniques and is non-constructive. 
In the original paper the theorem was shown to hold for $c=d\sim \frac{1}{10^{72}}$. 
Later on \cite{bt91}, the same authors proved it for $c=c(\epsilon)=c'\epsilon^2$ and $d=(1+\epsilon)^{-1}$
  for every $0<\epsilon<1$, where $c'$ is a universal (tiny) constant.
They were interested in the case when $\epsilon$ is small; the quadratic dependence of $c(\epsilon)$ on $\epsilon$ was shown to be necessary in \cite{berman}. 
In another regime, modern methods can be used to obtain the constants
  $c=1/128$ and $d=1/8\sqrt{2\pi}$ \cite{casazza,tropp}.

In this note, we present a short proof that uses only basic linear algebra, achieves much better constants, and contains
  a deterministic $O(n^4)$ time algorithm for finding the set $\sigma$.
Our method of proof involves building $\sigma$ iteratively using a `barrier' potential function. Such a method was used by 
  Batson and the authors in \cite{bss} to construct linear size spectral sparsifiers of graphs.

Specifically, we prove the following 
  generalization of Theorem~\ref{thm:bt}, in which $\|\cdot\|_2$ refers to the spectral (i.e., operator) norm and $\|\cdot\|_F$ refers to the Frobenius (i.e., Hilbert-Schmidt) norm.
\begin{theorem}\label{mainthm} 
Suppose $v_1,\ldots v_m\in\R^n$, $\sum_i v_iv_i^T=I$, and $0<\epsilon
<1$. Let $L:\ell_2^n\to\ell_2^n$ be a linear operator.
Then there is a subset $\sigma\subset [m]$ of size $|\sigma|\ge
\left\lfloor \epsilon^2\frac{\|L\|_F^2}{\|L\|_2^2} \right\rfloor$ for
which $\{Lv_i\}_{i\in\sigma}$ is linearly independent and
\[\lambda_\mathrm{min}\left(\sum_{i\in \sigma} (Lv_i)(Lv_i)^T\right)> \frac{(1-\epsilon)^2
\|L\|_F^2}{m},\]
where $\lambda_\mathrm{min}$ is computed on $\mathrm{span}\{Lv_i\}_{i\in \sigma}$.
\end{theorem}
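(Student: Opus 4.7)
The plan is to apply the barrier-function method of \cite{bss}, building $\sigma$ greedily. At the $k$-th step we hold $A_k = \sum_{j \leq k} (Lv_{i_j})(Lv_{i_j})^T$ together with a ``lower barrier'' $\ell_k$ that stays strictly below $\lambda_{\min}^+(A_k)$, the smallest positive eigenvalue; the barrier advances by a fixed shift $\delta > 0$ per step, so $\ell_k = \ell_0 + k\delta$. With $k_\star = \lfloor \epsilon^2\|L\|_F^2/\|L\|_2^2\rfloor$ iterations and $\ell_0$ chosen appropriately, the final barrier $\ell_{k_\star}$ will be exactly the quantity $(1-\epsilon)^2\|L\|_F^2/m$ that we need to certify as a lower bound on $\lambda_{\min}$.

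The driver is the lower potential
\[ \Phi^\ell(A) = \sum_{\lambda \in \mathrm{spec}(A),\,\lambda > 0} \frac{1}{\lambda - \ell}, \]
finite precisely when $\ell < \lambda_{\min}^+(A)$ and blowing up as any positive eigenvalue slides toward $\ell$. The invariant we maintain is $\Phi^{\ell_k}(A_k) \leq \Phi$ for some fixed threshold $\Phi$, seeded by $\Phi^{\ell_0}(A_0) = 0$. The crux is a step lemma: whenever $\Phi^\ell(A) \leq \Phi$, there exists $i$ with $Lv_i \notin \mathrm{span}\{Lv_{i_j}\}_{j \leq k}$ and $\Phi^{\ell+\delta}(A + (Lv_i)(Lv_i)^T) \leq \Phi$. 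The linear independence will be a free by-product: if $Lv_i$ lay in $\mathrm{range}(A)$ and produced a new positive eigenvalue arbitrarily close to $\ell + \delta$, the potential would diverge, forcing the selected $i$ to grow the range.

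To prove the step lemma, split the change in potential into a barrier-shift piece,
$\Phi^{\ell+\delta}(A) - \Phi^\ell(A) = \sum_\lambda \delta/[(\lambda-\ell)(\lambda-\ell-\delta)]$,
and a rank-one-addition piece, which by the Sherman-Morrison formula applied to the restricted inverse of $A - \ell I$ on $\mathrm{range}(A)$ contributes a negative term of shape
$-w^T (A-(\ell+\delta) I)^{-2}w / (1 + w^T(A-(\ell+\delta) I)^{-1}w)$,
with $w = Lv_i$ and an appropriate correction when $w$ has a component outside $\mathrm{range}(A)$. Averaging over $i \in [m]$ and invoking the hypothesis $\sum_i v_i v_i^T = I$ to collapse $\sum_i (Lv_i)^T M (Lv_i) = \mathrm{Tr}[L^T M L]$, one bounds the averaged change using $\mathrm{Tr}[L^T M L] \leq \|L\|_2^2 \cdot \mathrm{Tr}[M]$ together with $\mathrm{Tr}[L L^T] = \|L\|_F^2$. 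Provided $\delta$ is chosen small enough in terms of $\Phi$, $\|L\|_2^2$, and $\|L\|_F^2$, the averaged change is non-positive, so some specific $i$ works.

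The principal obstacle will be calibrating the three parameters $\ell_0$, $\delta$, and $\Phi$ against each other to extract the sharp constants $\epsilon^2$ and $(1-\epsilon)^2$ rather than weaker ones. The averaging inequality from the step lemma caps $\delta$ roughly in proportion to $\|L\|_2^2/(\Phi \|L\|_F^2)$, while reaching $\ell_{k_\star} \geq (1-\epsilon)^2\|L\|_F^2/m$ demands that $k_\star \delta + \ell_0$ carry this much weight; balancing produces exactly the rate $k_\star \sim \epsilon^2\|L\|_F^2/\|L\|_2^2$. A secondary technical wrinkle is handling the rank increase in the Sherman-Morrison step cleanly, done most easily either via a perturbation argument (replace $A$ by $A + \eta(I - \Pi_A)$ and send $\eta \to 0^+$) or via the determinantal reformulation $\Phi^\ell(A) = -\partial_\ell \log \det_+(A - \ell I)$, where $\det_+$ denotes the product of nonzero eigenvalues.
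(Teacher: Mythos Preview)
Your proposal follows the right template (iterative barrier selection) but the specific potential you chose will not close, and the difficulty is structural rather than a matter of tuning $\ell_0,\delta,\Phi$. The paper's potential is $\Phi_b(A)=\tr{L^T(A-bI)^{-1}L}$, weighted by $L$ and taken over \emph{all} eigenvalues of $A$ including the zero ones, with the barrier $b$ moving \emph{down} from $b_0=(1-\epsilon)\|L\|_F^2/m$ toward zero. Both features you dropped are load-bearing. Because the zero eigenvalues are kept, $(A-b'I)$ is invertible and Sherman--Morrison applies globally; the appearance of a new nonzero eigenvalue above $b'$ is then forced by the single scalar condition $w^T(A-b'I)^{-1}w<-1$ (the kernel part $-\|Qw\|^2/b'$ dominates). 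In your restricted potential the ``appropriate correction when $w\notin\mathrm{range}(A)$'' is the fresh term $1/(\lambda_{k+1}'-\ell')$, and bounding that term \emph{is} the whole problem, since $\lambda_{k+1}'$ is exactly the quantity you are trying to control. Because the paper's potential carries the $L$-weight, the averaging type-matches: the shift $\Phi_b(A)-\Phi_{b'}(A)$ and the summed rank-one numerator/denominator are all of the form $\tr{L^T[\cdot]L}$, and one then uses $LL^T\preceq\|L\|_2^2I$ inside the trace. With your unweighted $\sum_{\lambda>0}1/(\lambda-\ell)$ the barrier shift is a bare trace while the averaged rank-one numerator is $\tr{L^T(A-\ell'I)^{-2}L}$; the inequality $\tr{L^TML}\le\|L\|_2^2\tr{M}$ you invoke points the wrong way, since you need a \emph{lower} bound on that numerator to certify that the decrease beats the shift.

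The barrier direction is also reversed. Each added vector produces the new \emph{smallest} positive eigenvalue, so the natural invariant is that all nonzero eigenvalues exceed a \emph{shrinking} $b$, and the residual barrier $b_0-k_\star\delta$ after $k_\star$ steps is the $\lambda_{\min}$ bound. An increasing barrier would require the eigenvalues created in early steps to climb past every subsequent threshold $\ell_k$, which rank-one interlacing alone does not deliver in this regime (you are adding at most $n$ vectors, not oversampling as in \cite{bss}).
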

This form of generalization   was introduced by 
  Vershynin \cite{versh} in his study of contact points of convex bodies via 
  John's decompositions of the identity.
It says that given any such decomposition and any $L:\ell_2^n\to\ell_2^n$, there is a part of the 
  decomposition on which $L$ is well-invertible whose size is proportional to the stable rank $\frac{\|L\|_F^2}{\|L\|_2^2}$.

The original form of Bourgain and Tzafriri's theorem follows quickly from Theorem \ref{mainthm} with constants
  \[ c(\epsilon)=\epsilon^2\quad\textrm{and}\quad d(\epsilon)=(1-\epsilon)^2\]
  by taking $\{v_i\}$ from the standard basis $\{e_i\}_{i\le n}$ and assuming $\|Le_i\|=1$. 
This dominates previous bounds in all regimes, for $\epsilon$ small and large.

\section{Proof of the Theorem}
We will build the matrix $A=\sum_{i\in \sigma}(Lv_i)(Lv_i)^T$ by an
iterative process that adds one vector to $\sigma$ in each step. The
process will be guided by the potential function\footnote{
This potential function was inspired by Stieltjes transform,
  which appears in the analysis of the eigenvalues of random matrices.
However, we are unaware of a formal connection.
This potential function is also related to, but is not identical to,
  the logarithmic barrier function used in Interior Point Algorithms
  for Linear Programming.}
\begin{align*}
  \Phi_b(A) &= \sum_i (Lv_i)^T(A-bI)^{-1}(Lv_i) \\&=
  \tr{L^T(A-bI)^{-1}L}\qquad\textrm{since $\sum_i v_iv_i^T=I$,}
\end{align*}
where the {\em barrier} $b$ is a real number that varies from step to
step.

Initially $A=0$, the barrier is at $b=b_0>0$, and the potential is
\[ \Phi_{b_0}(0)=\tr{L^T(0-b_0I)^{-1}L}=-\tr{L^T
  L}/b_0=-\frac{\|L\|_F^2}{b_0}.\] Each step of the process involves
adding some rank-one matrix $ww^T$ to $A$ where $w\in \{Lv_i\}_{i\le
  m}$ (if $w=Lv_j$ then this corresponds to adding $j$ to $\sigma$)
and shifting the barrier towards zero by some fixed amount $\delta>0$,
without increasing the potential. Specifically, we want
\[ \Phi_{b-\delta}(A+ww^T)\le \Phi_b(A).\] We will maintain the
invariant that after $k$ vectors have been added, $A$ has exactly $k$
nonzero eigenvalues, all greater than $b$.  Keeping the potential
small (in fact, sufficiently negative) will ensure that there is a
suitable vector to add at each step.

In any step of the process, we are only interested in vectors $w$
which add a new nonzero eigenvalue that is greater than
$b'=b-\delta$. These are identified in the following lemma, where
the notation $A\succeq B$ means that $A-B$ is positive semidefinite.
\begin{lemma}
  Suppose $A\succeq 0$ has $k$ nonzero eigenvalues, all greater than
  $b' > 0$.  If $w\neq 0$ and
  \begin{equation}\label{eqn:denom} w^T(A-b'I)^{-1}w <
    -1\end{equation}
  then $A+ww^T$ has $k+1$ nonzero eigenvalues 
  greater than $b'$. \end{lemma}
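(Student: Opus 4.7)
The plan is to combine the matrix determinant lemma with Cauchy's interlacing theorem and a sign/parity argument to pin down where the new eigenvalues of $A+ww^T$ sit. Let $\lambda_1 \geq \cdots \geq \lambda_n$ be the eigenvalues of $A$ and $\mu_1 \geq \cdots \geq \mu_n$ those of $A+ww^T$. By hypothesis $\lambda_1,\ldots,\lambda_k > b'$ and $\lambda_{k+1}=\cdots=\lambda_n=0$. Since $ww^T$ is positive semidefinite of rank one, Cauchy interlacing gives $\mu_i \geq \lambda_i \geq \mu_{i+1}$ for every $i$, so $\mu_k \geq \lambda_k > b' > 0 = \lambda_{k+1} \geq \mu_{k+2}$. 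Thus the first $k$ eigenvalues of $A+ww^T$ already exceed $b'$ and the last $n-k-1$ are at most $0$; only the position of $\mu_{k+1}$ relative to $b'$ is still open, and at most $k+1$ eigenvalues of $A+ww^T$ can be positive.

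Next I would apply the matrix determinant lemma at $x=b'$. Because $b'>0$ is not an eigenvalue of $A$, the matrix $b'I-A$ is invertible and
\[
\det(b'I - A - ww^T) \;=\; \det(b'I - A)\,\bigl(1 - w^T(b'I-A)^{-1}w\bigr).
\]
The first factor equals $\prod_{i=1}^n(b'-\lambda_i)$, which has sign $(-1)^k$ since exactly $k$ of the $\lambda_i$ exceed $b'$ while the remaining $n-k$ contribute the positive quantity $b'$. The second factor equals $1 + w^T(A-b'I)^{-1}w$, which is strictly negative by hypothesis~(\ref{eqn:denom}). Hence the whole determinant has sign $(-1)^{k+1}$, and in particular is nonzero, so $b'$ is not an eigenvalue of $A+ww^T$.

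The same determinant factors as $\prod_{i=1}^n(b'-\mu_i)$, whose sign is $(-1)^{j}$ where $j$ is the number of $\mu_i$ strictly larger than $b'$. Interlacing already pinned $j\in\{k,k+1\}$, and the sign computation forces $j\equiv k+1\pmod 2$, leaving only $j=k+1$. Therefore $\mu_{k+1}>b'>0$, so $A+ww^T$ has exactly $k+1$ eigenvalues greater than $b'$, all of which are in particular nonzero.

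There is no real obstacle here beyond careful bookkeeping of signs; the only thing worth sanity-checking is that the hypothesis is not vacuous. Since $(A-b'I)^{-1}$ is positive definite on the range of $A$, the quadratic form $w^T(A-b'I)^{-1}w$ can be below $-1$ only when $w$ has a nonzero component in $\ker A$ with $\|w_\perp\|^2/b' > 1$, which is exactly what must happen for a rank-one update to produce a new nonzero eigenvalue; this gives a useful consistency check with the conclusion.
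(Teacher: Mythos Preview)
Your proof is correct, and it takes a genuinely different route from the paper's. Both arguments start from Cauchy interlacing to reduce the question to locating the single eigenvalue $\mu_{k+1}$. From there the paper applies the Sherman--Morrison formula to the \emph{trace} of the resolvent: it shows that $\tr{(A+ww^T-b'I)^{-1}}-\tr{(A-b'I)^{-1}}$ is strictly positive (the hypothesis makes the Sherman--Morrison correction term positive), and then bounds this same difference from above via interlacing by $\frac{1}{\mu_{k+1}-b'}+\frac{1}{b'}$, which forces $\mu_{k+1}>b'$. You instead pass to the \emph{determinant} of $b'I-A-ww^T$ via the matrix determinant lemma, read off the sign $(-1)^{k+1}$ from the hypothesis, and let a parity argument against $j\in\{k,k+1\}$ finish the job. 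Your argument is slightly slicker in that it avoids the term-by-term comparison of the two resolvent sums; the paper's trace-based route, on the other hand, stays within the same machinery (Sherman--Morrison on resolvents) that drives the potential $\Phi_b$ throughout the rest of the proof, so it is more thematically unified with what follows. Your final ``consistency check'' paragraph is not needed for the proof but is a correct observation.
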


\begin{proof}
  Let $\lambda_1 \geq \dotsb \geq \lambda_k$ be the nonzero
  eigenvalues of $A$, and let $\lambda_{1}' \geq \dotsb \geq
  \lambda_{k+1}'$ be the $k+1$ largest eigenvalues of $A+w w^{T}$.  As
  the latter matrix is obtained from $A$ by the addition of a rank one
  positive semi-definite matrix, their eigenvalues interlace \cite{bss}:
  \[
  \lambda_{1}' \geq \lambda_{1} \geq \lambda_{2}' \geq \dotsb \geq
  \lambda_{k} \geq \lambda_{k+1}'.
  \]
  Consider the quantity
  \[ \tr{ (A-b'I)^{-1}} =\sum_{i\le
    k}\frac{1}{\lambda_i-b'}+\sum_{i>k} \frac{1}{0-b'},\] where we
  have written the positive and negative terms in the sum separately.
  By the Sherman-Morisson formula,
  \begin{equation}\label{eqn:kplus1}
    \tr{(A+ww^T-b'I)^{-1}} - \tr{ (A-b'I)^{-1}} 
    = - \frac{w^T(A-b'I)^{-2}w}{1+w^T(A-b'I)^{-1}w}.
  \end{equation}
  Since $w^T(A-b'I)^{-1}w < -1$, the denominator in the right-hand
  term is negative.  The numerator is positive since $A-b'I$ is
  non-singular and $(A-b'I)^{-2}\succeq 0$.  So, the right-hand side
  of \eqref{eqn:kplus1} is positive.

  On the other hand, a direct evaluation of this difference yields
  \begin{align*}
    0 & < \tr{ (A+ww^T-b'I)^{-1}} - \tr{ (A-b'I)^{-1}}
    \\
    & = \frac{1}{\lambda_{k+1}' - b'} - \frac{1}{0-b'} +
    \sum_{i=1}^{k} \frac{1}{\lambda_{i}' - b'} - \sum_{i=1}^{k}
    \frac{1}{\lambda_{i} - b'}
    \\
    &\le \frac{1}{\lambda_{k+1}'-b'}+\frac{1}{b'}\qquad\textrm{since $\frac{1}{\lambda_i'-b'}-\frac{1}{\lambda_i-b'}\le 0$ for all $i$ by interlacing.}
  \end{align*}
  As $\lambda_{k+1}'\ge 0$, this is only possible if
  $\lambda_{k+1}'>b'$, as desired.
\end{proof}

The updated potential after one step, as the barrier moves from $b$ to
$b'=b-\delta$, can be calculated using the Sherman-Morisson formula:
\begin{align*}
  \Phi_{b'}(A+ww^T) &= \tr{L^T(A-b'I+ww^T)^{-1}L} \\&=
  \tr{L^T(A-b'I)^{-1}L} - \frac{\tr{L^T(A-b'I)^{-1}w
      w^T(A-b'I)^{-1}L}}{1+w^T(A-b'I)^{-1}w} \\&=
  \tr{L^T(A-b'I)^{-1}L} - \frac{w^T(A-b'I)^{-1}L
    L^T(A-b'I)^{-1}w}{1+w^T(A-b'I)^{-1}w} \\&= \Phi_{b'}(A) -
  \frac{w^T(A-b'I)^{-1}LL^T(A-b'I)^{-1}w}{1+w^T(A-b'I)^{-1}w}.
\end{align*}
To prevent an increase in potential, we want choose a $w$ such that
\begin{equation}
  \label{eqn:update}
  \Phi_{b'}(A)- \frac{w^T(A-b'I)^{-1}LL^T(A-b'I)^{-1}w}{1+w^T(A-b'I)^{-1}w}\le
  \Phi_{b}(A).
\end{equation}
We can now determine how small we need the potential to be in order to
guarantee that a suitable $w$, which will allow us to keep on going,
always exists.
\begin{lemma}\label{lem:avg}
  Suppose $A$ has $k$ nonzero eigenvalues, all of which
  are greater than $b$, and let $Q$ be the orthogonal projection onto
  the kernel of $A$. If
  \begin{equation}\label{eqn:reqpot}\Phi_b(A)\le
    -m-\frac{\|L\|_2^2}{\delta}\end{equation} 
  and 
  \begin{equation}\label{eqn:req2} 0<\delta<b\le\delta
    \frac{\|QL\|_F^2}{\|L\|_2^2}\end{equation} 
then
  there exists a vector $w\in \{Lv_i\}_{i\le m}$ for which $A+ww^T$ has $k+1$ nonzero eigenvalues greater than $b'=b-\delta$ and 
  $\Phi_{b'}(A+ww^T)\le \Phi_b(A)$.\end{lemma}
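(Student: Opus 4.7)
The plan is an averaging argument over $w \in \{Lv_i\}_{i\le m}$: I would combine the eigenvalue-growth condition \eqref{eqn:denom} and the potential-nonincrease condition \eqref{eqn:update} into a single scalar inequality in $w$, then show its sum over $i$ is nonpositive so that some individual $i$ satisfies it. Set $M := (A - b'I)^{-1}$ and $\Delta := \Phi_b(A) - \Phi_{b'}(A)$. Since $(A-bI)^{-1} \succeq (A-b'I)^{-1}$ on both the range and kernel of $A$ (check eigenvalues $1/(\lambda - b) > 1/(\lambda - b')$ for $\lambda > b$, and $-1/b > -1/b'$ for $\lambda = 0$), one has $\Delta \ge 0$. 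Clearing the denominator in the update formula just before the lemma rewrites \eqref{eqn:update} as
\[
w^T M L L^T M w + \Delta\, w^T M w + \Delta \le 0.
\]
Because the first summand is nonnegative, a strict version of this inequality forces $w^T M w < -1$, which is exactly \eqref{eqn:denom}; so both conclusions of the lemma collapse into finding $w = Lv_i$ that satisfies this one inequality.

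Next I would sum it over $w = Lv_i$. Using $\sum_i v_i v_i^T = I$ to turn each quadratic-form sum into a trace, the total is
\[
\|L^T M L\|_F^2 + \Delta\, \Phi_{b'}(A) + m\Delta,
\]
so it suffices to prove $\|L^T M L\|_F^2 \le -\Delta(m + \Phi_{b'}(A))$. For the left side, $LL^T \preceq \|L\|_2^2 I$ gives $\|L^T M L\|_F^2 = \tr{L^T M (LL^T) M L} \le \|L\|_2^2\, \tr{L^T M^2 L}$. For the right, hypothesis \eqref{eqn:reqpot} combined with $\Delta \ge 0$ yields $-(m + \Phi_{b'}(A)) \ge \|L\|_2^2/\delta + \Delta$, so the right side is at least $\Delta \|L\|_2^2/\delta + \Delta^2$.

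The main obstacle is comparing $\delta\, \tr{L^T M^2 L}$ with $\Delta$. Using the resolvent identity $\Delta = \delta\, \tr{L^T(A-bI)^{-1} M\, L}$ and splitting $I = P + Q$ into projections onto the range and kernel of $A$, direct calculation gives
\[
\delta\, \tr{L^T M^2 L} \le \Delta + \frac{\delta^2 \|QL\|_F^2}{b b'^2}
\quad\text{and}\quad
\Delta \ge \frac{\delta \|QL\|_F^2}{bb'}.
\]
Combining these with the earlier bounds, the target inequality reduces to $b \|L\|_2^2 \le \delta \|QL\|_F^2$, which is precisely hypothesis \eqref{eqn:req2}. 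The delicate part is this kernel bookkeeping: both the resolvent difference and $\tr{L^T M^2 L}$ have kernel contributions ($1/(bb')$ vs.\ $1/b'^2$) that do not match, and the hypothesis \eqref{eqn:req2} is tuned exactly to absorb that discrepancy.
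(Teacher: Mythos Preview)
Your proposal is correct and takes essentially the same route as the paper: average the combined inequality over all $Lv_i$, bound the quadratic term via $LL^T\preceq\|L\|_2^2 I$, split into range/kernel pieces using $P+Q=I$, and reduce everything to the hypothesis $b\|L\|_2^2\le\delta\|QL\|_F^2$. The only cosmetic difference is that you package the range-part comparison through the resolvent identity $\Delta=\delta\,\tr{L^T(A-bI)^{-1}ML}$, whereas the paper states the equivalent operator bound $(b-b')P(A-b'I)^{-2}P\preceq P(A-bI)^{-1}P-P(A-b'I)^{-1}P$ and tracks $\Delta_b^P,\Delta_b^Q$ separately.
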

\begin{proof}\footnote{We would like to thank Pete Casazza for
    pointing out an important mistake in an earlier version of this
    proof.}
  The vectors satisfying {\em both} of the inequalities
   (\ref{eqn:denom}) and  (\ref{eqn:update}) are precisely those $w$ for
  which
  \begin{align*}
    & w^T (A-b'I)^{-1}LL^T(A-b'I)^{-1}w \\
    & \qquad \le (\Phi_{b}(A)-\Phi_{b'}(A))\cdot(-1-w^T(A-b'I)^{-1}w).
  \end{align*}
  We can show that such a $w$ exists by taking the sum over all
  $w\in\{Lv_i\}_{i\le m}$ and ensuring that the inequality holds in
  the sum, i.e., that
  \begin{align}\label{eqn:avg}
    &\tr{ L^T(A-b'I)^{-1}LL^T(A-b'I)^{-1}L}\notag\\
    &\qquad\le
    (\Phi_{b}(A)-\Phi_{b'}(A))\cdot(-m-\tr{L^T(A-b'I)^{-1}L)}).
  \end{align}
  Let $\dPhi := \Phi_b(A)-\Phi_{b'}(A)$.  From the assumption
  $\Phi_b(A)\le -m-\frac{\|L\|_2^2}{\delta}$ we immediately have
  \[ \tr{L^T(A-b'I)^{-1}L} = \Phi_b(A)-\dPhi \le
  -m-\frac{\|L\|_2^2}{\delta}-\dPhi\] and so (\ref{eqn:avg}) will follow from
  \begin{equation}\label{eqn:avg2}
    \tr{ L^T(A-b'I)^{-1}LL^T(A-b'I)^{-1}L}
    \le 
    \dPhi\cdot\left(\frac{\|L\|_2^2}{\delta}+\dPhi\right).
  \end{equation}
  Noting that $LL^T\preceq \|L\|_2^2 I$, we can bound the left hand
  side as
  \begin{equation}\label{eqn:l2} \tr{L^T(A-b'I)^{-1}L
      L^T(A-b'I)^{-1}L}
    \le 
    \|L\|_2^2 \tr{L^T(A-b'I)^{-2} L}.
  \end{equation}
  Let $P$ be the projection onto the image of $A$ and let $Q$ be the
  projection onto its kernel, so that $P+Q=I$.  Let
  $\Phi^P_{b'}(A)=\tr{L^TP(A-b'I)^{-1}PL}$ and
  $\Phi^Q_{b'}(A)=\tr{L^TQ(A-b'I)^{-1}QL}$ be the potentials computed
  on these subspaces.  Since $P$, $Q$, $A$, $(A-b'I)^{-1}$, and $(A-b'I)^{-2}$
  are mutually diagonalizable, we can write
  \[ \Phi_{b'}(A) = \Phi_{b'}^P(A)+\Phi_{b'}^Q(A),\qquad \dPhi =
  \dPhi^P+\dPhi^Q,\quad\textrm{ and}\]
  \[ \tr{L^T(A-b'I)^{-2}L} = \tr{L^TP(A-b'I)^{-2}PL} +
  \tr{L^TQ(A-b'I)^{-2}QL}.\] As $P(A-b'I)^{-1}P\succeq 0$ and
  $P(A-bI)^{-1}P\succeq 0$, it is easy to check that
  \[(b-b')P(A-b'I)^{-2}P\preceq P(A-bI)^{-1}P - P(A-b'I)^{-1}P\] which
  immediately gives
  \begin{equation}\label{eqn:dp} \|L\|_2^2\tr{L^TP(A-b'I)^{-2}PL} \le
    \dPhi^P\frac{\|L\|_2^2}{\delta}.\end{equation}
  Thus, by (\ref{eqn:avg2}), (\ref{eqn:l2}), and (\ref{eqn:dp}), we are done if we can show that
  \[
  \|L\|_2^2 \tr{L^TQ(A-b'I)^{-2}QL} \le
  (\dPhi^P+\dPhi^Q)\cdot\left(\frac{\|L\|_2^2}{\delta}+\dPhi\right)-\dPhi^P\frac{\|L\|_2^2}{\delta}.\]
  Taking into account that $\dPhi^P,\dPhi^Q\ge 0$, this is implied by the
  statement
  \begin{equation}\label{eqn:last}
    \|L\|_2^2 \tr{L^TQ(A-b'I)^{-2}QL} \le
    \dPhi^Q\cdot\left(\frac{\|L\|_2^2}{\delta}+\dPhi^Q\right).
  \end{equation}
  We now compute $ \tr{L^TQ(A-b'I)^{-2}QL} = \frac{\|QL\|_F^2}{b'^2} $
  and
  \[ \dPhi^Q = \tr{L^TQ ( ( A-bI)^{-1} - (A-b'I)^{-1} ) ) QL} =
  \delta\frac{\|QL\|_F^2}{bb'}\] which upon substituting and rearranging
  reduces (\ref{eqn:last}) to
  \[ \|L\|_2^2 \le \frac{\delta\|QL\|_F^2}{b}\] which we have assumed
  in (\ref{eqn:req2}).
\end{proof}

\begin{proof}[Proof of Theorem \ref{mainthm}]
We set
  \[
  b_0 = \frac{(1-\epsilon)\|L\|_F^2}{m} \qquad \text{and} \qquad
  \delta = \frac{(1-\epsilon)\|L\|_2^2}{\epsilon m}.
  \]
Requirement (\ref{eqn:reqpot}) of Lemma
  \ref{lem:avg} is satisfied at the beginning of the process 
  as
\[ \Phi_{b_0}(0)=-\frac{\|L\|_F^2}{b_0} =
  -m-\frac{\|L\|_2^2}{\delta}.
\]
To verify that requirement (\ref{eqn:req2}) is satisfied initially,
   first note that the theorem is vacuously true if 
   $\epsilon^{2} \frac{\|L\|_F^2}{\|L\|_2^2} < 1$.
Assuming the converse and recalling that $\epsilon < 1$, we may show
  $\frac{\|L\|_F^2}{\|L\|_2^2} \geq  1/\epsilon$
 which implies that $\delta < b_{0}$.
The inequality
  \[ b_0 \le\delta
  \frac{\|QL\|_F^2}{\|L\|_2^2}\]
  is initially true as $A = 0$ and so
    $Q=\mathrm{Proj}_{\mathrm{ker}(A)}=I$.

As long as condition (\ref{eqn:req2}) is satisfied, we may apply
  Lemma \ref{lem:avg} to add a vector to $\sigma$ while maintaining
  $\Phi_{b}(A) \leq  \Phi_{b_0}(0)$.
The left-hand inequality in (\ref{eqn:req2}) will be satisfied
  after the first $t-1$ steps if
\[
  \delta < b = b_{0} - (t-1) \delta \qquad \iff \qquad t \delta < b_{0}.
\]
This inequality is satisfied for all $t \leq \epsilon^{2} \frac{\|L\|_F^2}{\|L\|_2^2}$
as
\[
  \epsilon^{2} \frac{\|L\|_F^2}{\|L\|_2^2} \delta 
= 
  \frac{\epsilon (1-\epsilon) \|L\|_F^2}{m} < b_{0}.
\]
The right-hand inequality in (\ref{eqn:req2}) will always be satisfied
  if it is satisfied initially as 
  the Frobenius norm $\|QL\|_F^2$
  decreases by at most $\|L\|_2^2$ in each step.
Taking
  $t = \left\lfloor \epsilon^2\frac{\|L\|_F^2}{\|L\|_2^2} \right\rfloor$ steps leaves the barrier at
  \[
b_{0} - \delta t \geq  \frac{(1-\epsilon)\|L\|_F^2}{m} -
  \epsilon^2(1-\epsilon)\frac{\|L\|_F^2}{\epsilon m}
= 
\frac{(1-\epsilon)^{2} \|L\|_F^2 }{m}\] which is the
  promised bound.
\end{proof}
\section*{Acknowledgements}
We would like to thank Kate Juschenko, 
  Roman Vershynin and especially Pete Casazza for helpful comments and corrections 
  to an earlier version of this manuscript.

\end{document}